\documentclass[12pt]{article}
\usepackage{lmodern}
\usepackage{microtype}

\usepackage{amsthm,amsmath,amssymb}
\usepackage[latin1]{inputenc}
\newtheorem{theorem}{Theorem}
\newtheorem{definition}[theorem]{Definition}
\newtheorem{lemma}[theorem]{Lemma}
\newtheorem{proposition}[theorem]{Proposition}

\newtheorem{example}[theorem]{Example}

\newtheorem{remark}[theorem]{Remark}

\makeatletter

\@addtoreset{equation}{section}
\makeatother

\title{Dirichlet Energy of the Scalar Curvature}

\author{Ahmed Mohammed Cherif\footnote{University Mustapha Stambouli Mascara, Faculty of Exact Sciences, Mascara 29000, Algeria. Email: a.mohammedcherif@univ-mascara.dz}}
\date{}
\begin{document}
\maketitle

\begin{abstract}
In this article, we derive the first variation formula for the Dirichlet scalar curvature energy functional on Riemannian manifolds. Motivated by this variational framework, we introduce a natural generalization of Einstein metrics, called Dirichlet--Einstein metrics, and we investigate some of their fundamental properties.
\end{abstract}

\begin{flushleft}
Keywords:  Einstein-Hilbert functional, Dirichlet energy.\\
Subjclass: 53C25, 83C05.
\end{flushleft}

\maketitle

\section{Introduction}

The Einstein-Hilbert functional $\mathcal{E}$ associates to each Riemannian metric $g$ its total scalar curvature, namely
\begin{equation}\label{eq0}
\mathcal{E}:\mathcal{M}\longrightarrow\mathbb{R},\qquad
g\longmapsto \mathcal{E}(g)=\int_M S\,v^g,
\end{equation}
where $\mathcal{M}$ denotes the space of smooth Riemannian metrics on $M$, $S$ is the scalar curvature of $g$, and $v^g$ is the volume form induced by $g$. This functional plays a central role in Riemannian geometry and mathematical physics. In particular, it arises as the action functional of general relativity and has been extensively investigated from both geometric and variational viewpoints \cite{berger,Besse,calabi1,calabi,hilbert,Richard}.

Several generalizations of the Einstein-Hilbert functional have been proposed in the literature. Notable examples include the functionals introduced by Besse \cite{Besse}, Calabi \cite{calabi,calabi1}, and, more recently, Cherif \cite{cherif}.

While the Einstein-Hilbert functional depends only on the scalar curvature itself, it is also natural to study the variation of scalar curvature across the manifold. Recall that a Riemannian metric has constant scalar curvature if and only if the gradient of its scalar curvature vanishes identically. This observation suggests considering the Dirichlet energy of the scalar curvature as a quantitative measure of the deviation from the constant-scalar-curvature condition.

Motivated by this idea, we introduce a new functional on the space of Riemannian metrics defined by
\[
\mathcal{E}_{D}(g)=\int_M |\nabla S|^2\,v^g,
\]
which corresponds to the $L^2$-norm of the gradient of the scalar curvature. This functional is nonnegative and vanishes precisely at metrics of constant scalar curvature. Consequently, it provides a natural variational framework for investigating the distribution of scalar curvature and for introducing a new class of geometrically distinguished metrics. In particular, we derive its first variation formula and we define the associated critical metrics, which we call Dirichlet-Einstein metrics. We then establish several of their fundamental geometric properties.

\section{Dirichlet Energy of Scalar Curvature}

First, we recall some standard definitions and notations. Let $(M,g)$ be an $n$-dimensional Riemannian manifold, and let
$X,X_1,\ldots,X_{q-1},Y,Z\in \Gamma(TM)$.
We denote by $R$, $\operatorname{Ric}$, and $S$ the Riemannian curvature tensor, the Ricci tensor, and the scalar curvature of $(M,g)$, respectively. These tensors are defined by
\begin{equation}\label{curvature}
    R(X,Y)Z=\nabla_X \nabla_Y Z-\nabla_Y \nabla_X Z-\nabla_{[X,Y]}Z,
\end{equation}
\begin{equation}\label{ricci}
    \operatorname{Ric}(X,Y)=g(R(X,e_i)e_i,Y),\quad S=\operatorname{Ric}(e_i,e_i),
\end{equation}
where $\nabla$ denotes the Levi--Civita connection of $g$, and $\{e_1,\ldots,e_n\}$ is a local orthonormal frame on $M$.
Given a smooth function $f$ on $M$, its gradient is $g(\operatorname{grad} f, X)=X(f)$,
its Hessian is  $(\operatorname{Hess} f)(X,Y)=g(\nabla_X \operatorname{grad} f, Y)$, and its Laplacian is defined by
\begin{equation}\label{laplacian}
    \Delta f=-\operatorname{Tr}\left(\operatorname{Hess} f\right).
\end{equation}
The divergence of a $(0,q)$-tensor field $\alpha$ on $M$ is defined by
\begin{equation}\label{divergence}
    (\delta \alpha)(X_1,\ldots,X_{q-1})
    =-(\nabla_{e_i}\alpha)(e_i,X_1,\ldots,X_{q-1}).
\end{equation}
For further details, we refer the reader to \cite{Besse,ON}.


\begin{definition}\label{definition1}
Let $\mathcal{M}$ denote the space of Riemannian metrics on a closed orientable manifold $M$. The Dirichlet scalar curvature energy of a Riemannian manifold $(M,g)$ is the functional
\begin{equation}\label{eq1}
\mathcal{E}_{D}:\mathcal{M}\longrightarrow\mathbb{R}_+,
\qquad
g\longmapsto\mathcal{E}_{D}(g)=\frac{1}{2}\int_M |\nabla S|^2\, v^g,
\end{equation}
where $S$ denotes the scalar curvature of $(M,g)$, $\nabla$ is the gradient with respect to $g$, and $v^g$ is the Riemannian volume form.
\end{definition}

The functional introduced in Definition \ref{definition1} may be regarded as a Dirichlet-type counterpart of the Einstein--Hilbert (or total scalar curvature) functional. While the latter involves the integral of the scalar curvature, the functional $\mathcal{E}_{D}$ measures the $L^2$-norm of its gradient. In particular, metrics of constant scalar curvature are absolute minimizers of $\mathcal{E}_{D}$. The second-order variational properties of the Einstein--Hilbert functional are well understood (see \cite{berger,Besse,calabi1,calabi,hilbert,Koiso1,Koiso,Koiso3,Koiso4,Richard}).

Let $(M,g)$ be a closed orientable Riemannian manifold, and let $(g_t)_{|t|<\varepsilon}$ be a smooth one-parameter family of Riemannian metrics on $M$ such that $g_0=g$. In local coordinates $(x^i)$, the metric $g_t$ can be written as $g_t=g_{ij}(t,x)\,dx^i\otimes dx^j$. We denote by
\[
h=\left.\frac{\partial g_t}{\partial t}\right|_{t=0},
\]
the infinitesimal variation of the metric. Then $h\in \Gamma(S^2T^*M)$ is a smooth symmetric $(0,2)$-tensor field on $M$. The following variational formulas hold.

\begin{theorem}\label{th1}
Let $(M,g)$ be a closed orientable Riemannian manifold and let
$(g_t)_{|t|<\varepsilon}$ be a smooth variation of $g$ with variation tensor $h$. Then, the first variation of the Dirichlet scalar curvature energy in the direction of $h$ is given by
\begin{equation}\label{eq2}
\left.\frac{d}{dt}\right|_{t=0}\mathcal{E}_{D}(g_t)=-\int_M \langle \mathcal{D}(g),h\rangle\,v^g,
\end{equation}
where $\langle \cdot,\cdot\rangle$ denotes the metric induced by $g$ on
$S^2T^*M$, and $\mathcal{D}(g)$ is the gradient of $\mathcal{E}_{D}$ given by
\begin{equation}\label{eq3}
\mathcal{D}(g)=(\Delta S)\operatorname{Ric}-\Big[\Delta(\Delta S)+\frac{1}{4}|\nabla S|^2\Big]g -\operatorname{Hess}(\Delta S)+\frac{1}{2}dS\otimes dS.
\end{equation}
\end{theorem}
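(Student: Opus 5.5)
The plan is to differentiate under the integral sign and reduce everything to the standard first variation formulas for the scalar curvature, the inverse metric and the volume form. The result is then put in the form $\int_M\langle\,\cdot\,,h\rangle v^g$ by integrating by parts on the closed manifold $M$.

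With the sign conventions fixed above ($\Delta=-\operatorname{Tr}\operatorname{Hess}$, $\delta=-\operatorname{div}$), I will use the classical formulas (see \cite{Besse}):
\[
\frac{d}{dt}\Big|_{t=0} S_t=\Delta(\operatorname{tr}h)+\delta\delta h-\langle \operatorname{Ric},h\rangle,\qquad
\frac{d}{dt}\Big|_{t=0} v^{g_t}=\frac12(\operatorname{tr}h)\,v^g,\qquad
\frac{d}{dt}\Big|_{t=0} g_t^{ij}=-h^{ij}.
\]
Writing $|\nabla S|^2=g^{ij}\partial_iS\,\partial_jS$ and denoting $S'=\frac{d}{dt}|_{t=0}S_t$, we get
\[
\frac{d}{dt}\Big|_{t=0}|\nabla S_t|^2_{g_t}=-h(\nabla S,\nabla S)+2\,g(\nabla S,\nabla S').
\]
Hence
\[
\frac{d}{dt}\Big|_{t=0}\mathcal{E}_D(g_t)=\int_M\Big[-\tfrac12 h(\nabla S,\nabla S)+g(\nabla S,\nabla S')+\tfrac14|\nabla S|^2\operatorname{tr}h\Big]v^g .
\]
The first and last terms are already of the form $\langle T,h\rangle$, namely with $-\frac12\,dS\otimes dS$ and $\frac14|\nabla S|^2 g$ respectively.

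For the middle term, Green's formula on the closed manifold gives $\int_M g(\nabla S,\nabla S')\,v^g=\int_M (\Delta S)\,S'\,v^g$. Substituting the formula for $S'$, I move every derivative onto $\Delta S$:
\begin{itemize}
\item Since $\Delta$ is self-adjoint, $\int_M \Delta S\,\Delta(\operatorname{tr}h)\,v^g=\int_M\langle \Delta(\Delta S)\,g,h\rangle v^g$.
\item For the $\delta\delta h$ term I use that, with $\delta=-\operatorname{div}$, the formal adjoint of $\delta$ on $1$-forms is $d$. The formal adjoint of $\delta$ on symmetric $2$-tensors is the symmetrized covariant derivative, which sends $df$ to $\operatorname{Hess}f$. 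Therefore $\int_M f\,\delta\delta h\,v^g=\int_M\langle\operatorname{Hess}f,h\rangle v^g$ with $f=\Delta S$.
\item The curvature term contributes $-\langle(\Delta S)\operatorname{Ric},h\rangle$ directly.
\end{itemize}

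Collecting all contributions, the integrand becomes $\langle T,h\rangle$ with
\[
T=-\tfrac12 dS\otimes dS+\tfrac14|\nabla S|^2g+\Delta(\Delta S)\,g+\operatorname{Hess}(\Delta S)-(\Delta S)\operatorname{Ric}=-\mathcal{D}(g),
\]
which is exactly \eqref{eq2}--\eqref{eq3}. The main point requiring care is the sign bookkeeping. First, the variation formula for $S$ must be checked against the conventions \eqref{curvature}--\eqref{divergence}, for example by testing on a conformal variation $h=2ug$, where one should recover $S'=2(n-1)\Delta u-2uS$. Second, the adjoint identity for $\delta\delta$ must come out with a plus sign for $\operatorname{Hess}$. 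To confirm it, I would verify $\int_M f\,\delta\delta h=\int_M\langle\operatorname{Hess} f,h\rangle$ by two applications of the divergence theorem in a local orthonormal frame; the two minus signs from $\delta$ cancel.
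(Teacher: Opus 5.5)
Your proposal is correct and follows essentially the same route as the paper. You both differentiate under the integral using the standard variations of $g^{ij}$, $v^g$ and $S$, move the derivative off $\partial_t S_t$ onto $\Delta S$, and then transfer $\Delta(\operatorname{Tr}h)$ and $\delta\delta h$ onto $\Delta S$ to produce $\Delta(\Delta S)\,g$ and $\operatorname{Hess}(\Delta S)$. The only difference is presentational: the paper writes these integrations by parts as pointwise divergence identities in a normal frame (see \eqref{eq12}, \eqref{eq14}, \eqref{eq16}, using $\tfrac12\mathcal{L}_{\operatorname{grad}(\Delta S)}g=\operatorname{Hess}(\Delta S)$) before applying the divergence theorem, whereas you quote the corresponding adjoint relations directly; your sign checks are consistent with the paper's conventions.
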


\begin{definition}
The symmetric $(0,2)$-tensor $\mathcal{D}(g)$ defined by \eqref{eq3} is called the
Dirichlet--Einstein tensor (or Dirichlet scalar curvature tensor) associated with the Riemannian metric $g$.
\end{definition}

To prove Theorem \ref{th1}, we shall make use of the following classical variation formulas.

\begin{lemma}[\cite{Reto}, \cite{Peter}]\label{lemma1}
Let $(M,g)$ be a Riemannian manifold, and let $(g_t)_{|t|<\varepsilon}$ be a smooth variation of $g$ with variation tensor
$h$.
Then the first variations of the volume element and the scalar curvature are given by
\begin{eqnarray}
\left.\frac{\partial v^{g_t}}{\partial t}\right|_{t=0}
&=&
\frac{1}{2}(\operatorname{Tr}h)\,v^g
=
\frac{1}{2}\langle g,h\rangle\,v^g,\label{eq4}\\
\left.\frac{\partial S_t}{\partial t}\right|_{t=0}
&=&
\Delta(\operatorname{Tr}h)
+\delta(\delta h)
-\langle \operatorname{Ric},h\rangle.\label{eq5}
\end{eqnarray}
\end{lemma}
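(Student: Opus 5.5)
The statement to be proved is Lemma \ref{lemma1}, the classical first variation formulas for $v^g$ and $S$. I would prove both by a direct computation in local coordinates, differentiating at $t=0$. The only ingredients needed are the variation of the inverse metric, $\partial_t g^{ij}=-h^{ij}$, and the variation of the Christoffel symbols. The approach is the standard one from Besse.

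For \eqref{eq4}, write $v^{g_t}=\sqrt{\det g_{ij}(t)}\,dx^1\wedge\cdots\wedge dx^n$. Jacobi's formula gives
\[
\partial_t\det g=\det g\cdot g^{ij}h_{ij}=\det g\,\operatorname{Tr}h .
\]
Hence $\partial_t\sqrt{\det g}=\tfrac12(\operatorname{Tr}h)\sqrt{\det g}$, and this is \eqref{eq4}, because $\langle g,h\rangle=\operatorname{Tr}h$.

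For \eqref{eq5}, I would first compute the tensor $\Gamma'=\partial_t\Gamma$. Fix a point and use normal coordinates there, so that first derivatives of $g$ vanish. Differentiating the Koszul formula then gives
\[
g(\Gamma'(X,Y),Z)=\tfrac12\big[(\nabla_Xh)(Y,Z)+(\nabla_Yh)(X,Z)-(\nabla_Zh)(X,Y)\big].
\]
Next, from \eqref{curvature} and \eqref{ricci}, the Ricci tensor varies by
\[
\operatorname{Ric}'(Y,Z)=(\nabla_{e_i}\Gamma')(e_i,Y;Z)-(\nabla_Y\Gamma')(e_i,e_i;Z),
\]
where I lower the last index with $g$. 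Substituting $\Gamma'$ and tracing, the divergence term contributes $-\tfrac12\big[(\delta h)$ terms and $\nabla_Y$ of $\nabla\operatorname{Tr}h\big]$. The result, with the conventions \eqref{laplacian} and \eqref{divergence}, is
\[
\operatorname{Tr}_g\operatorname{Ric}'=\delta(\delta h)+\Delta(\operatorname{Tr}h).
\]
Finally, $S=g^{ij}R_{ij}$, so
\[
S'=-h^{ij}R_{ij}+g^{ij}R'_{ij}=-\langle\operatorname{Ric},h\rangle+\Delta(\operatorname{Tr}h)+\delta(\delta h).
\]

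The main obstacle is the sign bookkeeping. The paper uses the geometer's Laplacian $\Delta=-\operatorname{Tr}\operatorname{Hess}$ and $\delta=-\operatorname{div}$, so $\delta\delta h=\nabla_i\nabla_jh_{ij}$ with no sign change, and $g^{ij}\nabla_i\nabla_j\operatorname{Tr}h=-\Delta\operatorname{Tr}h$. I would carry out this trace carefully, checking that the two second-derivative terms $\nabla_i\nabla_jh_{ij}$ coming from $\nabla\Gamma'$ combine with coefficient $1$. Commuting covariant derivatives is not needed, because the scalar trace already yields this symmetric combination.

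As a sanity check I would use the conformal variation $h=2fg$, with $n=2$. In that case \eqref{eq5} must reduce to the known formula $S'=2\Delta f-fS$ in the paper's sign convention.
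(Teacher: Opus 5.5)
The paper does not prove this lemma; it quotes it from M\"uller and Topping. Your outline (Jacobi's formula, then $\Gamma'$, then $\operatorname{Ric}'$, then $S'=-h^{ij}R_{ij}+g^{ij}R'_{ij}$) is the standard derivation in those sources. Your volume computation, your formula for $g(\Gamma'(X,Y),Z)$, and the final bookkeeping for $S'$ are correct. The problem is the central step.

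Read $(A,B;C)$ as $g(\Gamma'(A,B),C)$, as your own formula for $\Gamma'$ suggests. Then your expression $(\nabla_{e_i}\Gamma')(e_i,Y;Z)-(\nabla_Y\Gamma')(e_i,e_i;Z)$ equals $g(R'(e_i,Y)e_i,Z)=-g(R'(Y,e_i)e_i,Z)$. It has the first two slots of $R$ in the wrong order relative to the paper's convention $\operatorname{Ric}(Y,Z)=g(R(Y,e_i)e_i,Z)$. Carrying out the trace confirms this:
\begin{itemize}
\item $g(\Gamma'(e_i,e_j),e_j)=\tfrac12 e_i(\operatorname{Tr}h)$;
\item $g(\Gamma'(e_i,e_i),Z)=-(\delta h)(Z)-\tfrac12 Z(\operatorname{Tr}h)$;
\item so your formula gives $\nabla_i\nabla_i\operatorname{Tr}h-\nabla_i\nabla_jh_{ij}=-\Delta(\operatorname{Tr}h)-\delta(\delta h)$.
\end{itemize}
This is the negative of what you then assert. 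For example, on flat $\mathbb{R}^n$ with $h=2fg$ it gives $-2(n-1)\Delta f$ instead of $2(n-1)\Delta f$. So the identity $\operatorname{Tr}_g\operatorname{Ric}'=\delta(\delta h)+\Delta(\operatorname{Tr}h)$ is announced rather than derived, and it fails exactly at the sign bookkeeping you flagged as the main obstacle.

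The repair is to write $\operatorname{Ric}(Y,Z)$ as the metric-free trace of $X\mapsto R(X,Y)Z$. This also avoids differentiating the $g_t$-orthonormal frame hidden in the paper's definition of $\operatorname{Ric}$. Then use $R'(X,Y)Z=(\nabla_X\Gamma')(Y,Z)-(\nabla_Y\Gamma')(X,Z)$, so the differentiation index contracts with the output index of $\Gamma'$:
\[
R'_{jk}=\nabla_i{\Gamma'}^{i}_{jk}-\nabla_j{\Gamma'}^{i}_{ik}.
\]
Since ${\Gamma'}^{i}_{ik}=\tfrac12\nabla_k\operatorname{Tr}h$ and $g^{jk}\nabla_i{\Gamma'}^{i}_{jk}=\nabla_i\nabla_jh_{ij}-\tfrac12\nabla_i\nabla_i\operatorname{Tr}h$, the trace is $\nabla_i\nabla_jh_{ij}-\nabla_i\nabla_i\operatorname{Tr}h=\delta(\delta h)+\Delta(\operatorname{Tr}h)$, as required.

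Your sanity check also has the wrong target. For $h=2fg$ one has $\langle\operatorname{Ric},h\rangle=2fS$ and $\delta(\delta(2fg))=-2\Delta f$. The lemma therefore predicts $S'=2(n-1)\Delta f-2fS$, i.e.\ $2\Delta f-2fS$ when $n=2$. This agrees with $S_{e^{2u}g}=e^{-2u}(S+2\Delta u)$. Checked against your target $2\Delta f-fS$, the test would report a spurious discrepancy in the zeroth-order term. With the correct target it would have exposed the sign error above.
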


\begin{proof}[Proof of Theorem \ref{th1}] First, note that
\begin{equation}\label{eq6}
\frac{d}{dt}\mathcal{E}_{D}(g_t)\Big|_{t=0}=\frac{1}{2}\int_M \Big[\frac{\partial |\nabla^{t} S_{t}|^2}{\partial t}v^{g_t}+|\nabla^{t} S_{t}|^2\frac{\partial v^{g_t}}{\partial t}\Big]_{t=0}.
\end{equation}
For any $t\in(-\epsilon,\epsilon)$, we have
\begin{eqnarray}\label{eq7}
  \frac{1}{2}\frac{\partial |\nabla^{t} S_{t}|^2}{\partial t}
   &=&\nonumber \frac{1}{2}\frac{\partial}{\partial t}\Big[g_t^{ij}\frac{\partial S_t}{\partial x_i}\frac{\partial S_t}{\partial x_j}\Big]  \\
   &=&\nonumber \frac{1}{2}\frac{\partial g_t^{ij}}{\partial t}\frac{\partial S_t}{\partial x_i}\frac{\partial S_t}{\partial x_j}
       + g_t^{ij}\frac{\partial}{\partial t}\Big(\frac{\partial S_t}{\partial x_i}\Big)\frac{\partial S_t}{\partial x_j}\\
   &=&\nonumber \frac{1}{2}\frac{\partial g_t^{ij}}{\partial t}\frac{\partial S_t}{\partial x_i}\frac{\partial S_t}{\partial x_j}
       + g_t^{ij}\frac{\partial}{\partial x_i}\Big(\frac{\partial S_t}{\partial t}\Big)\frac{\partial S_t}{\partial x_j}\\
   &=&\nonumber \frac{1}{2}\frac{\partial g_t^{ij}}{\partial t}\frac{\partial S_t}{\partial x_i}\frac{\partial S_t}{\partial x_j}
       + g_t^{ij}\frac{\partial}{\partial x_i}\Big(\frac{\partial S_t}{\partial t}\frac{\partial S_t}{\partial x_j}\Big)
       -g_t^{ij}\frac{\partial S_t}{\partial t}\frac{\partial^2 S_t}{\partial x_i\partial x_j}.\\
\end{eqnarray}
Define $\omega\in\Gamma(T^*M)$ by $\omega=\frac{\partial S_t}{\partial t}\Big|_{t=0}dS$. From (\ref{eq7}), we find that
\begin{eqnarray}\label{eq8}
  \frac{1}{2}\frac{\partial |\nabla^{t} S_{t}|^2}{\partial t}\Big|_{t=0}
   &=& \frac{1}{2}\frac{\partial g_t^{ij}}{\partial t}\Big|_{t=0}\frac{\partial S}{\partial x_i}\frac{\partial S}{\partial x_j}
       -\delta\omega+\frac{\partial S_t}{\partial t}\Big|_{t=0}\Delta S.
\end{eqnarray}
By using $\frac{\partial g_t^{ij}}{\partial t}\Big|_{t=0}=-g^{ia}g^{jb}h_{ab}$ (see \cite{Reto}), equation (\ref{eq8}) becomes
\begin{eqnarray}\label{eq9}
  \frac{1}{2}\frac{\partial |\nabla^{t} S_{t}|^2}{\partial t}\Big|_{t=0}
   &=& -\frac{1}{2}\langle dS\otimes dS,h\rangle
       -\delta\omega+\frac{\partial S_t}{\partial t}\Big|_{t=0}\Delta S.
\end{eqnarray}
By using Lemma \ref{lemma1}, we obtain
\begin{eqnarray}\label{eq10}
\frac{\partial S_t}{\partial t}\Big|_{t=0}\Delta S
   &=& (\Delta S)\Delta(\operatorname{Tr} h)+(\Delta S)\delta(\delta h) -\langle (\Delta S)\operatorname{Ric},h\rangle .
\end{eqnarray}
Calculating in a normal frame at $x \in M$, we have
\begin{eqnarray}\label{eq11}
(\Delta S)\Delta(\operatorname{Tr} h)
   &=&\nonumber -(\Delta S)e_i\big(e_i(\operatorname{Tr} h)\big) \\
   &=&\nonumber -e_i\big((\Delta S)e_i(\operatorname{Tr} h)\big)+e_i(\Delta S)e_i(\operatorname{Tr} h)\\
   &=&\nonumber -e_i\big((\Delta S)e_i(\operatorname{Tr} h)\big)+e_i\big(e_i(\Delta S)\operatorname{Tr} h\big)
   -e_i\big(e_i(\Delta S)\big)\operatorname{Tr} h.\\
\end{eqnarray}
Therfore, the first term in the right-hand side of (\ref{eq10}), is given by
\begin{eqnarray}\label{eq12}
(\Delta S)\Delta(\operatorname{Tr} h)
   &=&\nonumber\delta\big((\Delta S)d(\operatorname{Tr} h)\big)-\delta\big((\operatorname{Tr} h) d(\Delta S)\big)
 +\Delta(\Delta S)\langle g,h\rangle.\\
\end{eqnarray}
Let $f\in C^\infty(M)$ and $\alpha\in \Gamma(T^*M)$. Then the following identity holds (see \cite{Peter,ON})
\begin{equation}\label{eq13}
    \delta(f\alpha)=-\langle df,\alpha\rangle +f\delta \alpha,
\end{equation}
where $\langle df,\alpha\rangle = \alpha(\operatorname{grad} f)$. An application of the above formula yields
\begin{equation}\label{eq14}
    (\Delta S)\delta(\delta h)=\delta\big((\Delta S)\delta h\big)+\langle d(\Delta S),\delta h\rangle.
\end{equation}
By using the following formula (see \cite{Peter})
\begin{equation}\label{eq15}
    (\delta T)(Z)=\delta\big(T(\cdot,Z)\big)+\frac{1}{2}\big\langle T,\mathcal{L}_Z g\big\rangle ,
\end{equation}
where $\mathcal{L}_Z g$ denotes the Lie derivative of $g$ along the vector field $Z\in\Gamma(TM)$ (see \cite{ON}), and $T\in\Gamma(\odot^2T^*M)$. Consequently, we obtain
\begin{eqnarray}\label{eq16}
\langle d(\Delta S),\delta h\rangle
   &=&\nonumber (\delta h)\big( \operatorname{grad} (\Delta S)\big)\\
   &=&\nonumber \delta\big(h(\cdot,\operatorname{grad} (\Delta S))\big)
   +\frac{1}{2}\big\langle h,\mathcal{L}_{\operatorname{grad} (\Delta S)} g\big\rangle \\
   &=& \delta\big(h(\cdot,\operatorname{grad} (\Delta S))\big)
   +\big\langle h, \operatorname{Hess} (\Delta S) \big\rangle ,
\end{eqnarray}
From equations (\ref{eq14}) and (\ref{eq16}), the second term on the left-hand side of (\ref{eq10}) is given by
\begin{eqnarray}\label{eq17}
(\Delta S)\delta(\delta h)
   &=&\nonumber \delta\big((\Delta S)\delta h\big)
     +\delta\big(h(\cdot,\operatorname{grad} (\Delta S))\big)
     +\big\langle h, \operatorname{Hess} (\Delta S) \big\rangle .\\
\end{eqnarray}
By substituting (\ref{eq10}), (\ref{eq12}), and (\ref{eq17}) into (\ref{eq6}) and subsequently applying the divergence theorem (see \cite{baird}) together with Lemma \ref{lemma1}, we obtain Theorem \ref{th1}.
\end{proof}

Moreover, Theorem \ref{th1} admits an extension to non-compact manifolds. From Theorem \ref{th1}, we immediately deduce the following.

\begin{theorem}\label{th2}
A Riemannian metric $g$ on a smooth manifold $M$ is a critical point of the Dirichlet scalar curvature energy if and only if
\begin{equation}\label{eq18}
(\Delta S)\operatorname{Ric}=\Big[\Delta(\Delta S)+\frac{1}{4}|\nabla S|^2\Big]g +\operatorname{Hess}(\Delta S)-\frac{1}{2}dS\otimes dS.
\end{equation}
\end{theorem}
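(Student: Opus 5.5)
The plan is to deduce Theorem \ref{th2} directly from the first variation formula of Theorem \ref{th1}. By definition, $g$ is critical for $\mathcal{E}_D$ when the derivative $\left.\frac{d}{dt}\right|_{t=0}\mathcal{E}_D(g_t)$ vanishes for every smooth variation $(g_t)$ of $g$. On a non-compact manifold we interpret this, as is standard, for variations with compactly supported variation tensor $h$. In that setting the integration by parts in the proof of Theorem \ref{th1} still goes through, because every divergence term $\delta\omega$, $\delta\big((\Delta S)\delta h\big)$, and so on, is compactly supported. So \eqref{eq2} holds for all $h\in\Gamma(S^2T^*M)$ with compact support.

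For sufficiency: if \eqref{eq18} holds, then moving all terms to one side gives exactly $\mathcal{D}(g)=0$ by \eqref{eq3}. Hence the right-hand side of \eqref{eq2} vanishes for every $h$, and $g$ is critical.

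For necessity, assume $\int_M\langle\mathcal{D}(g),h\rangle v^g=0$ for all admissible $h$. Every symmetric $h$ with compact support arises as a variation tensor, for instance via $g_t=g+th$, which is a Riemannian metric for $|t|$ small because $h$ is bounded. Choose a cutoff $\varphi\ge0$ with compact support and set $h=\varphi\,\mathcal{D}(g)$; this is smooth, symmetric and compactly supported. Then $0=\int_M\varphi|\mathcal{D}(g)|^2v^g$. Since $\varphi$ is arbitrary and $|\mathcal{D}(g)|^2$ is continuous, we get $\mathcal{D}(g)\equiv0$, which is \eqref{eq18} after rearranging.

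The only delicate point is justifying Theorem \ref{th1} for compactly supported variations on non-compact $M$. I would handle this by noting that every boundary term produced in the proof is the divergence of a $1$-form supported in $\operatorname{supp}h$. The divergence theorem on a compact domain containing this support then kills each such term. Everything else is a direct rearrangement.
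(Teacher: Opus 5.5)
Your proposal is correct and follows the paper's route. The paper deduces Theorem \ref{th2} immediately from the first variation formula \eqref{eq2}--\eqref{eq3}, with the remark that it extends to non-compact $M$; your argument supplies the details the paper leaves implicit: compactly supported variations $g_t=g+th$, divergence terms supported in $\operatorname{supp}h$, and the test tensor $h=\varphi\,\mathcal{D}(g)$. On non-compact $M$, state explicitly that criticality means $\frac{d}{dt}\big|_{t=0}\frac12\int_K|\nabla^t S_t|^2v^{g_t}=0$ for compact $K\supset\operatorname{supp}h$, since $\mathcal{E}_D(g)$ itself may be infinite.
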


The following result provides a complete characterization of the critical points of the Dirichlet--scalar curvature energy on closed manifolds.

\begin{theorem}\label{th3}
Let $(M,g)$ be a closed Riemannian manifold of dimension $n\neq 6$. Then $g$ is a critical point of the Dirichlet--scalar curvature energy if and only if its scalar curvature is constant.
\end{theorem}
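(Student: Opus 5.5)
The plan is to take the trace of the Euler--Lagrange equation \eqref{eq18} from Theorem \ref{th2}, integrate the resulting scalar identity over the closed manifold $M$, and use integration by parts to show that $\int_M|\nabla S|^2\,v^g$ is a multiple of itself with coefficient different from $1$ whenever $n\neq 6$.

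The converse direction is immediate, so I will dispose of it first. If $S$ is constant, then $dS=0$, $\Delta S=0$, $\Delta(\Delta S)=0$ and $\operatorname{Hess}(\Delta S)=0$. Hence $\mathcal{D}(g)=0$ in \eqref{eq3}, and $g$ is critical by Theorem \ref{th1}. (Alternatively, such metrics are absolute minimizers of the nonnegative functional $\mathcal{E}_D$.)

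For the forward direction, suppose \eqref{eq18} holds and contract it with $g$ using an orthonormal frame. With the paper's sign convention $\Delta f=-\operatorname{Tr}\operatorname{Hess} f$, the traces are
\[
\operatorname{Tr}\operatorname{Ric}=S,\qquad \operatorname{Tr} g=n,\qquad \operatorname{Tr}\operatorname{Hess}(\Delta S)=-\Delta(\Delta S),\qquad \operatorname{Tr}(dS\otimes dS)=|\nabla S|^2 .
\]
The traced equation is therefore
\[
S\,\Delta S=(n-1)\,\Delta(\Delta S)+\frac{n-2}{4}\,|\nabla S|^2 .
\]

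Next I integrate this identity over $M$, which is closed. By the divergence theorem, $\int_M\Delta(\Delta S)\,v^g=0$. Integration by parts with the nonnegative Laplacian gives $\int_M S\,\Delta S\,v^g=\int_M|\nabla S|^2\,v^g$. Together these yield
\[
\frac{6-n}{4}\int_M|\nabla S|^2\,v^g=0 .
\]
For $n\neq 6$ this forces $\nabla S\equiv 0$, and since $M$ is connected (implicitly assumed), $S$ is constant.

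The only real care needed is keeping the sign conventions straight. The trace of the Hessian carries a minus sign relative to $\Delta$, and $\int_M S\,\Delta S\,v^g$ is $+\int_M|\nabla S|^2\,v^g$ precisely because $\Delta$ is the positive (geometer's) Laplacian. A sign slip in either place would change the exceptional dimension, so these are the steps to double-check. The computation also explains why $n=6$ must be excluded: there the integrated trace identity degenerates to $0=0$ and gives no information.
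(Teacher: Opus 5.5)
Your proposal is correct and follows the paper's proof essentially step for step: you trace \eqref{eq18} to get $S\,\Delta S=(n-1)\Delta(\Delta S)+\frac{n-2}{4}|\nabla S|^2$, integrate over the closed manifold, and use $\int_M S\,\Delta S\,v^g=\int_M|\nabla S|^2\,v^g$ to reach $\frac{6-n}{4}\int_M|\nabla S|^2\,v^g=0$. The only differences are that you prove the converse direction explicitly, which the paper leaves to a later remark, and that your final identity includes the ``$=0$'' missing from the paper's last display.
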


\begin{proof}
By taking the trace of both sides of (\ref{eq18}), we obtain
\begin{eqnarray}\label{eq19}
S(\Delta S)=(n-1)\Delta(\Delta S)+\frac{n-2}{4}\,|\nabla S|^2.
\end{eqnarray}
By the divergence theorem, together with (\ref{eq19}), we deduce that
\begin{eqnarray}\label{eq20}
\int_MS(\Delta S)\,v^g=\frac{n-2}{4}\int_M|\nabla S|^2\,v^g.
\end{eqnarray}
Now, combining (\ref{eq20}) with the identity $\int_M S\,\Delta S\, v^g = \int_M |\nabla S|^2\, v^g,$ we get
\begin{eqnarray}\label{eq20}
\frac{n-6}{4}\int_M|\nabla S|^2\,v^g.
\end{eqnarray}
Since $n \neq 6$, it follows that $S$ is constant on $M$.
\end{proof}

By substituting (\ref{eq19}) into (\ref{eq18}), we obtain the following Proposition.

\begin{proposition}\label{prop1}
A Riemannian metric $g$ on a smooth manifold $M$ of dimension $n>1$ is a critical point of the Dirichlet scalar curvature energy functional if and only if
\begin{equation}\label{eq18-2}
(\Delta S)\operatorname{Ric}=\frac{1}{n-1}\Big[S(\Delta S)+\frac{1}{4}|\nabla S|^2\Big]g +\operatorname{Hess}(\Delta S)-\frac{1}{2}dS\otimes dS.
\end{equation}
\end{proposition}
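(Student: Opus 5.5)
The plan is to start from the characterization in Theorem \ref{th2}: $g$ is critical if and only if \eqref{eq18} holds. The traced identity \eqref{eq19} was obtained by taking the trace of \eqref{eq18}, using $\operatorname{Tr}\operatorname{Ric}=S$, $\operatorname{Tr}g=n$, $\operatorname{Tr}\operatorname{Hess}(\Delta S)=-\Delta(\Delta S)$ (by the sign convention \eqref{laplacian}), and $\operatorname{Tr}(dS\otimes dS)=|\nabla S|^2$. Collecting terms gives
\[
S\,\Delta S=(n-1)\Delta(\Delta S)+\Big(\frac{n}{4}-\frac12\Big)|\nabla S|^2 ,
\]
which is \eqref{eq19}. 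Since $n>1$, this can be solved pointwise for the fourth-order term:
\[
\Delta(\Delta S)=\frac{1}{n-1}\Big[S\,\Delta S-\frac{n-2}{4}|\nabla S|^2\Big].
\]

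For the forward direction, assume \eqref{eq18}. Then \eqref{eq19} holds, and I substitute the expression above for $\Delta(\Delta S)$ into the bracket multiplying $g$ in \eqref{eq18}. The coefficient of $|\nabla S|^2$ becomes
\[
\frac14-\frac{n-2}{4(n-1)}=\frac{(n-1)-(n-2)}{4(n-1)}=\frac{1}{4(n-1)},
\]
so the bracket equals $\frac{1}{n-1}\big[S\,\Delta S+\frac14|\nabla S|^2\big]$. This is exactly \eqref{eq18-2}.

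The converse is where care is needed, since it does not follow by simply reversing the substitution. Assume \eqref{eq18-2} holds and trace it with the same rules. Writing $A=S\,\Delta S+\frac14|\nabla S|^2$, the trace reads
\[
S\,\Delta S=\frac{n}{n-1}A-\Delta(\Delta S)-\frac12|\nabla S|^2 .
\]
Solving for $\Delta(\Delta S)$ gives
\[
\Delta(\Delta S)=\frac{1}{n-1}S\,\Delta S+\Big(\frac{n}{4(n-1)}-\frac12\Big)|\nabla S|^2=\frac{1}{n-1}\Big[S\,\Delta S-\frac{n-2}{4}|\nabla S|^2\Big].
\]
This is precisely \eqref{eq19}, so the trace of \eqref{eq18-2} recovers \eqref{eq19} automatically.

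With \eqref{eq19} in hand, the bracket in \eqref{eq18-2} equals $\Delta(\Delta S)+\frac14|\nabla S|^2$ by the same arithmetic as in the forward direction. Hence \eqref{eq18} holds, and Theorem \ref{th2} shows that $g$ is critical.

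The only real obstacle is this self-consistency check in the converse: confirming that the trace of \eqref{eq18-2} reproduces \eqref{eq19} rather than some other relation. I expect it to go through because the construction is linear in the trace; beyond that, the work is just keeping the sign convention $\Delta=-\operatorname{Tr}\operatorname{Hess}$ straight.
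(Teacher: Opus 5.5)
Your proof is correct and takes the same route as the paper, which derives the proposition by substituting the trace identity \eqref{eq19} into \eqref{eq18}. The paper states only that substitution, which covers only the forward direction. Your check that the trace of \eqref{eq18-2} itself reproduces \eqref{eq19} supplies the converse, which the paper leaves implicit.
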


The following result shows that the Dirichlet--Einstein tensor satisfies a natural conservation law.

\begin{theorem}\label{th4}
Let $(M,g)$ be a Riemannian manifold. Then, the divergence of the Dirichlet--Einstein tensor is zero (that is, $\delta \mathcal{D}(g)=0$).
\end{theorem}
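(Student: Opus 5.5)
The plan is to prove $\delta\mathcal{D}(g)=0$ by a direct, purely local computation, so that no compactness is needed. The idea is to take the divergence of each of the four terms in \eqref{eq3} separately and check that they cancel in pairs. Throughout, write $u=\Delta S$. We use the conventions \eqref{laplacian} and \eqref{divergence}, so $\Delta=-\operatorname{Tr}\operatorname{Hess}$ and $\delta$ is minus the trace of $\nabla$. (A conceptual alternative on closed manifolds is diffeomorphism invariance of $\mathcal{E}_D$: plugging $h=\mathcal{L}_Zg$ into \eqref{eq2} gives $\int_M\langle\delta\mathcal{D}(g),Z^\flat\rangle v^g=0$ for every $Z$. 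Since the statement is for arbitrary Riemannian manifolds, I will do the pointwise computation instead.)

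The ingredients are the following standard identities, each checked in a normal frame.
\begin{itemize}
\item The product rule $\delta(fT)=f\,\delta T-T(\operatorname{grad}f,\cdot)$, which is the tensor analogue of \eqref{eq13}. In particular $\delta(fg)=-df$.
\item The contracted second Bianchi identity, which in these conventions reads $\delta\operatorname{Ric}=-\tfrac12 dS$.
\item The Bochner-type identity $\delta\operatorname{Hess}u=d(\Delta u)-\operatorname{Ric}(\operatorname{grad}u,\cdot)$. It comes from commuting covariant derivatives: $\nabla_{e_i}\nabla_{e_i}\nabla_X u=\nabla_X\nabla_{e_i}\nabla_{e_i}u+\operatorname{Ric}(\operatorname{grad}u,X)$.
\item The identity $\delta(dS\otimes dS)=(\Delta S)\,dS-\tfrac12 d|\nabla S|^2$. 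This holds because $-(\nabla_{e_i}dS)(e_i)=\Delta S$ and $dS(e_i)\nabla_{e_i}dS=\operatorname{Hess}S(\operatorname{grad}S,\cdot)=\tfrac12 d|\nabla S|^2$.
\end{itemize}

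Applying these term by term gives
\begin{align*}
\delta(u\operatorname{Ric}) &= -\tfrac12 u\,dS-\operatorname{Ric}(\operatorname{grad}u,\cdot),\\
\delta\big(-[\Delta u+\tfrac14|\nabla S|^2]g\big) &= d(\Delta u)+\tfrac14 d|\nabla S|^2,\\
\delta(-\operatorname{Hess}u) &= -d(\Delta u)+\operatorname{Ric}(\operatorname{grad}u,\cdot),\\
\delta(\tfrac12 dS\otimes dS) &= \tfrac12 u\,dS-\tfrac14 d|\nabla S|^2.
\end{align*}
Adding these, the $\operatorname{Ric}(\operatorname{grad}u,\cdot)$ terms cancel between the first and third lines. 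The $d(\Delta u)$ terms cancel between the second and third. The $u\,dS$ terms cancel between the first and fourth, and the $d|\nabla S|^2$ terms cancel between the second and fourth. Hence $\delta\mathcal{D}(g)=0$.

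The main obstacle is keeping signs consistent with the paper's conventions, namely the sign-reversed Laplacian and divergence. The delicate points are the sign in $\delta\operatorname{Ric}=-\tfrac12dS$ and the sign and curvature term in the commutation formula for $\delta\operatorname{Hess}u$. I would first verify these two identities carefully in a normal frame, since the cancellations above depend on exactly those signs. As a consistency check, the cancellation pattern shows that the coefficients $\tfrac14$ and $\tfrac12$ in \eqref{eq3} are precisely the ones forced by the divergence-free property.
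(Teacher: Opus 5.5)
Your proof is correct and is essentially the paper's argument: a pointwise normal-frame computation using the contracted Bianchi identity $\delta\operatorname{Ric}=-\tfrac12 dS$, the Ricci commutation formula for third derivatives of $\Delta S$, and $\nabla_{\nabla S}\nabla S=\tfrac12\nabla|\nabla S|^2$. You organize it term by term via the product rule rather than expanding everything at once, and all four of your divergence formulas have the right signs in the paper's conventions.
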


\begin{proof}
In a normal frame $\{e_i\}$ at $x \in M$, with $X = e_j$ fixed, we have
\begin{equation}\label{eq22}
    \delta \mathcal{D}(g) (X)
   = -(\nabla_{e_i}\mathcal{D}(g))(e_i,X)
   = -e_i\big(\mathcal{D}(g)(e_i,X)\big).
\end{equation}
By the definitions of the Dirichlet scalar curvature tensor and the Hessian tensor, we obtain
\begin{eqnarray}\label{eq23}
\mathcal{D}(g)(e_i,X)
&=&\nonumber(\Delta S)\operatorname{Ric}(e_i,X)-\Big[\Delta(\Delta S)+\frac{1}{4}|\nabla S|^2\Big]g(e_i,X) \\
& &-g(\nabla_{e_i}^M\nabla(\Delta S),X)+\frac{1}{2}e_i(S)g(X,\nabla S).
\end{eqnarray}
Substituting (\ref{eq23}) into (\ref{eq22}),  we conclude that
\begin{eqnarray}\label{eq24}
\delta \mathcal{D}(g) (X)
   &=&\nonumber-e_i(\Delta S)\operatorname{Ric}(e_i,X)-(\Delta S)e_i(\operatorname{Ric}(e_i,X))\\
   & &\nonumber   +\Big[e_i(\Delta(\Delta S))+\frac{1}{4}e_i(|\nabla S|^2)\Big]g(e_i,X)+g(\nabla_{e_i}^M\nabla_{e_i}^M\nabla(\Delta S),X) \\
   & &-\frac{1}{2}e_i(e_i(S))g(X,\nabla S)-\frac{1}{2}e_i(S)g(X,\nabla_{e_i}^M\nabla S).
\end{eqnarray}
Using the definition of the gradient operator and the definition of the divergence, equation (\ref{eq24}) becomes
\begin{eqnarray}\label{eq25}
\delta \mathcal{D}(g) (X)
   &=&\nonumber-\operatorname{Ric}(\nabla(\Delta S),X)+(\Delta S)(\delta\operatorname{Ric})(X)\\
   & &\nonumber   +X(\Delta(\Delta S))+\frac{1}{4}X(|\nabla S|^2)+g(\operatorname{Tr}\,(\nabla^M)^2\nabla(\Delta S),X) \\
   & &+\frac{1}{2}\Delta(S)g(X,\nabla S)-\frac{1}{2}g(X,\nabla_{\nabla S}^M\nabla S).
\end{eqnarray}
Theorem \ref{th4} follows from \eqref{eq25} and the following formulas
\begin{eqnarray*}
(\delta \operatorname{Ric})(X)&=&-\frac{1}{2}X(S), \quad\nabla_{\nabla S}^M\nabla S = \frac{1}{2}\nabla|\nabla S|^2, \\
g(\operatorname{Tr}\,(\nabla^M)^2\nabla(\Delta S),X) &=& \operatorname{Ric}(\nabla(\Delta S),X)-X(\Delta(\Delta S)).
\end{eqnarray*}
\end{proof}

\begin{remark}
Let $M$ be an orientable manifold. We define
\[
\mathcal{M}_{c}=\{g\in \mathcal{M}\mid \operatorname{Vol}(M,g)=\int_M v^g=c\},
\]
for some constant $c>0$. This is a codimension $1$ submanifold of $\mathcal{M}$, and its tangent space at $g\in\mathcal{M}_{c}$ is given by
\[
T_g\mathcal{M}_{c}=\left\{T\in \Gamma(\odot^2 T^*M)\ \middle|\ \int_M \langle g,T\rangle\, v^g=0\right\}.
\]
A Riemannian metric $g$ is a critical point of $\mathcal{E}_{D}\big|_{\mathcal{M}_{c}}$ if and only if $\mathcal{D}(g)$ is orthogonal to $T_g\mathcal{M}_{c}$, that is, $\mathcal{D}(g)=\lambda\,g$ for some constant $\lambda$.
\end{remark}

\begin{definition}
A Riemannian metric $g$ satisfying $\mathcal{D}(g)=\lambda\,g$ is called a Dirichlet--Einstein metric.
In the special case $\lambda=0$, the metric $g$ is called a flat Dirichlet--Einstein metric.
\end{definition}

\begin{remark}
A flat Dirichlet--Einstein metric, namely a metric $g$ satisfying
$\mathcal{D}(g)=0,$ is precisely a critical point of the Dirichlet--scalar curvature energy. In particular, every Riemannian metric of constant scalar curvature is a flat Dirichlet--Einstein metric, since the vanishing of the gradient of the scalar curvature implies
$\mathcal{D}(g)=0$. Thus, the class of flat Dirichlet--Einstein metrics contains all constant-scalar-curvature metrics.
\end{remark}

\begin{example}[Existence of flat Dirichlet--Einstein metrics on warped products]
Let $(\mathbb{E}^{n-1},h)$ be an $(n-1)$-dimensional flat Einstein Riemannian manifold.
Let $M = \mathbb{R}_{+}^{*} \times \mathbb{E}^{n-1}$ be equipped with the warped product metric
$g = dx^2 + f(x)^2\, h,$ where the warping function is given by $f(x) = x^\alpha$ for some constant $\alpha \in \mathbb{R}^*$.
Let $X,Y\in\Gamma(T\mathbb{E}^{n-1})$. A direct computation shows that
\begin{eqnarray*}
\operatorname{Ric}(\partial_x,\partial_x)&=& -(n-1)\frac{f''}{f},\quad \operatorname{Ric}(\partial_x,X)=0, \\
\operatorname{Ric}(X,Y)&=& -\left[ff''+(n-2)(f')^2\right]h(X,Y),\\
S&=&  -\frac{(n-1)}{f^2}\left[2ff''+(n-2)(f')^2\right],\\
\Delta S &=& -S'' -(n-1)\frac{f'}{f}S',\\
\Delta(\Delta S) &=& -(\Delta S)'' -(n-1)\frac{f'}{f}(\Delta S)' ,\\
|\nabla S|^2&=&(S')^2,\\
dS \otimes dS&=&(S')^2 \, dx^2,\\
\operatorname{Hess}(\Delta S)&=& (\Delta S)''\, dx^2+f f' (\Delta S)'\, h.
\end{eqnarray*}
Substituting these formulas into \eqref{eq3}, together with $f(x)=x^\alpha$, we obtain
\begin{eqnarray*}
  \mathcal{D}(g)(\partial_x,\partial_x)
    &=& ( n-1 ) ^{2}{\alpha}^{2} ( n\alpha-2 ) \big[ 2( n-1 ) {\alpha}^{2}+ ( 7n-12 ) \alpha-20\big] {x}^{-6},\\
  \mathcal{D}(g)(\partial_x,X)
    &=&0,\\
  \mathcal{D}(g)(X,Y)
    &=& \alpha( n-1 )(n\alpha-2)\big((n-1)\alpha-6\big)\big[ 2( n-1 ) {\alpha}^{2}\\
    & &+ ( 7n-12 ) \alpha-20\big] {x}^{2\alpha-6}h(X,Y).
\end{eqnarray*}
Therefore, $\mathcal{D}(g)=0$ if and only if $n\alpha-2=0$ or $2( n-1 ) {\alpha}^{2}+ ( 7n-12 ) \alpha-20=0$, that is
$\alpha=2/n$ (in which case the scalar curvature vanishes) or $\alpha=\big(-7n+12\pm\sqrt{49n^2-8n-16}\big)/\big(4n-4\big)$ (in which case the scalar curvature is non-constant).
\end{example}

\begin{remark}
Every Einstein metric is a flat Dirichlet--Einstein metric.
The preceding example shows that there exist Riemannian flat Dirichlet--Einstein metrics that are not Einstein metrics.
\end{remark}


\subsection*{Conflict of interest statement}
The author declares no conflict of interest.

\subsection*{Data availability}
Not applicable.


\begin{thebibliography}{99}

\bibitem{baird} P. Baird, J.C. Wood, {\it Harmonic Morphisms between Riemannain Manifolds}, Clarendon Press, Oxford, 2003.

\bibitem{berger} M. Berger,  {\it Quelques formules de variation pour une structure riemannienne}, Ann. Sci. Ecole Norm. Sup., {\bf4} (1970), no. 3, 285-294.

\bibitem{Besse} A. L. Besse,  {\it Einstein Manifolds}, Springer-Verlag, Berlin, 1987.

\bibitem{calabi1} E. Calabi,  {\it Extremal K\"{a}hler metrics}, Ann. Math. Stud., {\bf102} (1982), 259-290.

\bibitem{calabi} E. Calabi,  {\it Extremal K\"{a}hler metrics}, II, in: Differential Geometry and Complex Analysis, Springer, Berlin (1985), 95-114.


\bibitem{hilbert} D. Hilbert,  {\it Die grundlagen der Physik}, Nachr. Ges. Wiss. Göttingen (1915), 395-407.


\bibitem{Koiso1} N. Koiso, {\it Nondeformability of Einstein metrics}, Osaka J. Math., {\bf 15} (1978), 419--433.

\bibitem{Koiso} N. Koiso, {\it On the second derivative of the total scalar curvature}, Osaka J. Math., {\bf16} (1979), 413-421.

\bibitem{Koiso3} N. Koiso, {\it Rigidity and stability of Einstein metrics-the case of compact symmetric spaces}, Osaka
J. Math., {\bf17} (1980), 51--73.

\bibitem{Koiso4} N. Koiso, {\it Rigidity and infinitesimal deformability of Einstein metrics}, Osaka J. Math., {\bf19}
(1982), 643--668.


\bibitem{cherif} A. Mohammed Cherif, {\it Second Variation of F-Einstein-Hilbert Functional}, Calculation J., accepted for publication 2026.


\bibitem{Reto} R. Müller,  {\it Differential Harnack Inequalities and the Ricci Flow}, European Mathematical Society, 2006.

\bibitem{ON} O'Neil,  {\it Semi-Riemannian Geometry}, Academic Press, New York, 1983.

\bibitem{Richard} R. M. Schoen,  {\it Variational theory for the total scalar curvature functional for Riemannian metrics and related topics}, Topics in calculus of variations, Lect. 2nd Sess., Montecatini/Italy (1987), Lect. Notes Math. {\bf1365}(1989), 120-154.


\bibitem{Peter} P. Topping,  {\it Lectures on the Ricci Flow}. Number 325 in London Mathematical Society Lecture Note Series. Cambridge University Press, October, 2006.

\end{thebibliography}
\end{document}